\theoremstyle{plain}
\newtheorem{thm}{Theorem}[section]
\newtheorem{Example}{Example}[section]
\newtheorem{note}{Note}[section]
\theoremstyle{definition}
\newtheorem{defn}{Definition}[section]
\begin{document}

\setcounter {page}{1}
\title{A note on rough $I$-convergence of double sequences}

\author[P. Malik, M. Maity AND A. GHOSH]{ Prasanta Malik*, Manojit Maity** and Argha Ghosh*\ }
\newcommand{\acr}{\newline\indent}
\maketitle
\address{{*\,} Department of Mathematics, The University of Burdwan, Golapbag, Burdwan-713104,
West Bengal, India.
                Email: pmjupm@yahoo.co.in., buagbu@yahoo.co.in\acr
           {**\,} Boral High School, Kolkata-700154, India. Email: mepsilon@gmail.com\\}

\maketitle
\begin{abstract}
In this paper we study some basic properties of rough $I$-convergent double sequences in the line of D$\ddot{u}$ndar [8]. We also study the set of all rough $I$-limits of a double sequence and relation between boundedness and rough $I$-convergence of a double sequence.
\end{abstract}
\author{}
\maketitle
{ Key words and phrases :} Double sequence, ideal, rough $I$-convergence, rough $I$-limit.\\

\textbf {AMS subject classification (2010) : 40A35,40B99} .  \\

\section{\textbf{Introduction:}}
The concept of $I$-convergence of double sequences was introduced by Balcerzak et. al. [2]. The notion of $I$-convergence of a double sequence, which is based on the structure of the ideal $I$ of subsets of $\mathbb{N} \times \mathbb{N} $, where $\mathbb{N}$ is the set of all natural numbers, is a natural generalization of the notion of convergence of a double sequence in Pringsheim's sense [17] as well as the notion of statistical convergence of a double sequence [14].

A lot of work on $I$-convergence of double sequences can be found in ([3], [4], [5], [7] etc.) and many others.

The concept of rough $I$-convergence of single sequences was introduced by Pal et. al. [15] which is a generalization of the earlier concepts namely rough convergence [16] and rough statistical convergence [1] of single sequences. Recently rough statistical convergence of double sequences has been introduced by Malik and Maity [13] as a generalization of rough convergence of double sequences [12] and investigated some basic properties of this type of convergence and also studied relation between the set of statistical cluster points and the set of rough limit points of a double sequence. Recently the notion of rough $I$-convergence for double sequences has been introduced by D$\ddot{u}$ndar [8]. In this paper we investigate some basic properties of rough $I$-convergence of double sequences in finite dimensional normed linear spaces which are not done earlier. We study the set of rough $I$-limits of a double sequence and also the relation between boundedness and rough $I$-convergence of a double sequence.

\section{\textbf{Basic Definitions and Notations}}
Throughout the paper $\mathbb{N}$ denotes the set of all positive integers and $\mathbb{R}$ denotes the set of all real numbers.
\begin{defn}[12]
Let $x=\{x_{jk}\}_{j,k~\in\mathbb{N}}$ be a double sequence in a
normed linear space$(X,\parallel.\parallel)$ and $r$ be a
non-negative real number. $x$ is said to be $r$-convergent to $\xi
\in X $, denoted by $x \overset {r}\rightarrow \xi$, if for any
${\epsilon}>0$ there exists $N_{\epsilon} ~ \in~ \mathbb{N}$ such
that for all $j,k\geq N_{\epsilon}$ we have
\begin{center}
$\parallel {x_{jk}-\xi}\parallel~< r+{\epsilon}$.
\end{center}
\end{defn}
In this case $\xi$ is called an $r$-limit of $x$.

It is clear that rough limit of $x$ is not necessarily unique (for $r>0$). So we consider
$r$-limit set of $x$ which is denoted by ${LIM}_x ^r$ and is
defined by ${LIM}_x ^r = \{\xi\in X: x \overset {r} \rightarrow
\xi\}$. $x$ is said to be $r$-convergent if ${LIM}_x ^r
\neq \emptyset$ and $r$ is called a rough convergence degree of $x$.

We recall that a subset $ K$ of $ \mathbb{N} \times \mathbb{N} $ is said to have natural density $d(K)$ if
\begin{center}
$d(K) = \lim\limits_{\stackrel{\stackrel{m\rightarrow
\infty}{n\rightarrow \infty}} ~} \frac{ K(n,m)}{n.m} $,
\end{center}
where $K(n,m) = |\{(j,k) \in \mathbb{N} \times \mathbb{N}: j \leq n, k \leq m \}| $.
\begin{defn}[13]
Let $x = \{x_{jk}\}_{j,k \in \mathbb{N}}$ be a double sequence in a
normed linear space $(X, \parallel.\parallel)$ and $r$ be a non negative real number.
$x$ is said to be $r$- statistically convergent to $\xi$, denoted by $x \overset{r-st_2}\longrightarrow \xi$,
if for any $\varepsilon > 0$ we have $d(A(\varepsilon)) = 0$, where
$A(\varepsilon) = \{ (j,k) \in \mathbb{N}\times \mathbb{N} : \parallel x_{jk} - \xi \parallel \geq r + \varepsilon \}$.
In this case $\xi$ is called $r$-statistical limit of $x$.
\end{defn}
Clearly for $r = 0$ from Definition 2.1 we get Pringsheim convergence of double sequences and from Definition 2.2 we get ordinary statistical convergence of double sequences.

\begin{defn}
A class $I$ of subsets
of a nonempty set $X$ is said to be an ideal in $X$ provided

(i) $\phi\in I$.

(ii) $ A,B\in I$ $~$implies $A\bigcup B\in I$.

(iii) $ A\in I,B\subset A $$~$ implies $~~$   $B\in I$.

$I$ is called a nontrivial ideal if $X\notin I$.

\end{defn}
\begin{defn}
A non empty class $F$ of
subsets of a nonempty set $X$ is said to be a filter in $X$ provided

(i) $\phi\notin F$.

(ii) $A,B\in F$ $~$ implies $~~$ $A\bigcap B\in F$.

(iii) $A\in F,A\subset B$ $~$ implies $~~$ $B\in F$.

If $I$ is a nontrivial ideal in $X$, $X\neq\phi$, then the class
\begin{center}
$F(I)=\{ M \subset X  : M = X \setminus A$ for some $A \in I \}$
\end{center}
is a filter on $X$, called the filter associated with $I$.
\end{defn}
\begin{defn}[4]
A nontrivial ideal $I$ in $X$ is called admissible if $\{x\} \in I$ for each $x \in X $.
\end{defn}
\begin{defn}[4]
A nontrivial ideal $I$ on $\mathbb{N} \times \mathbb{N} $ is called strongly admissible if $\left\{i\right\}\times \mathbb{N}$ and $\mathbb{N}\times\left\{i\right\}$ belong to $I$ for each $i\in \mathbb{N}$.
\end{defn}
Clearly every strongly admissible ideal is admissible.
Throughout the paper we take $I$ as a strongly admissible ideal in $\mathbb{N} \times \mathbb{N} $.

\begin{defn} [8]
Let $ x = \{x_{jk}\}_{j,k \in \mathbb{N}} $ be a double sequence in a normed linear space $ (X, \parallel . \parallel) $ and $ r $ be a non negative real number. Then $ x $ is said to be rough ideal convergent or $ rI$-convergent to $ \xi $, denoted by $ x \overset{rI}\longrightarrow \xi $, if for any $ \varepsilon > 0 $ we have $ \{(j,k) \in \mathbb{N} \times \mathbb{N} :\parallel x_{jk} - \xi \parallel \geq r + \varepsilon \} \in I $. In this case $ \xi $ is called $ rI $-limit of $ x $ and $x$ is called rough $I$-convergent to $\xi$ with $r$ as roughness degree.
\end{defn}
Throughout this paper $ x $ denotes the double sequence $\{x_{jk}\}_{j,k \in \mathbb{N}}$  in a normed linear space$(X, \parallel . \parallel)$ and $r$ denotes a non negative real number.

For $ r = 0 $ we get the usual $I$-convergence of double sequences. But our main interest is on the case where $ r > 0 $. Because it may  happen that a double sequence $ x =\{x_{jk}\}_{j,k \in \mathbb{N}}$ is not $I$-convergent in usual sense but there exists a double sequence $ y = \{y_{jk}\}_{j,k \in \mathbb{N}} $ which is $I$-convergent in usual sense and $ \parallel x_{jk} - y_{jk} \parallel \leq r $ for all $ (j,k) \in \mathbb{N} \times \mathbb{N} $ (or $ \{ (j,k) \in \mathbb{N} \times \mathbb{N} : ~\parallel x_{jk} - y_{jk} \parallel > r \} \in I $) for some $ r > 0 $. Then $ x $ is $ rI$-convergent.

From the definition it is clear that $ rI$-limit of $ x $ is not necessarily unique (for $ r > 0 $). So we consider $ rI $-limit set of $ x $, which is denoted by $ I-LIM^r_x = \{\xi \in X : x \overset{rI}\longrightarrow \xi \}$. $x$ is said to be $ rI $-convergent if $I-LIM^r_x \neq \emptyset $ and $ r $ is called a rough $I$-convergence degree of $ x $.


\begin{defn}
A double sequence $ x $ in $ X $ is said to be bounded if there exists a positive real number $ M $ such that $ \parallel x_{jk} \parallel < M $ for all $(j,k) \in \mathbb{N} \times \mathbb{N}$.
\end{defn}


\begin{defn}
A double sequence $ x $ in $ X $ is said to be $I$-bounded if there exists a positive real number $ M $ such that $\{(j,k) \in \mathbb{N} \times \mathbb{N} : \parallel x_{jk} \parallel \geq M \} \in I $.
\end{defn}


\begin{defn}[5]
A point $\xi \in X $ is said to be an $I$-cluster point of a double sequence $x = \{x_{jk}\}_{j, k \in \mathbb{N}}$ if and only if for each $\varepsilon > 0$ the set $ \{(j,k)\in \mathbb{N} \times \mathbb{N} : \parallel x_{jk} - \xi \parallel < \varepsilon \} \notin I$. We denote the set of all $I$-cluster points of $x$ by $I(\Gamma_x)$.
\end{defn}
\begin{thm}[5]
An $I$-bounded double sequence $ x=
\{x_{jk}\}_{j,k \in \mathbb{N} }$ of real numbers is $I$-convergent if and only if
$I-\limsup x = I-\liminf x.$
\end{thm}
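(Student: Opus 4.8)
The plan is to reduce everything to the two characterizing properties of the $I$-upper and $I$-lower limits. Recall that for an $I$-bounded real double sequence one puts $B_x = \{b \in \mathbb{R} : \{(j,k) : x_{jk} > b\} \notin I\}$ and $A_x = \{a \in \mathbb{R} : \{(j,k) : x_{jk} < a\} \notin I\}$, and defines $I\text{-}\limsup x = \sup B_x$ and $I\text{-}\liminf x = \inf A_x$; $I$-boundedness guarantees that both sets are nonempty and bounded, so the two limits are finite. The first step I would carry out is to record, straight from closure property (iii) of the ideal, that $B_x$ is closed downward and $A_x$ is closed upward. Consequently, for every $\varepsilon > 0$ one gets $\{(j,k) : x_{jk} > I\text{-}\limsup x + \varepsilon\} \in I$ and $\{(j,k) : x_{jk} < I\text{-}\liminf x - \varepsilon\} \in I$, since the arguments $I\text{-}\limsup x + \varepsilon$ and $I\text{-}\liminf x - \varepsilon$ lie outside $B_x$ and $A_x$ respectively. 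These two facts are the engine of the whole proof.

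For the forward implication I assume that $x$ is $I$-convergent to some $\xi$ and aim to pin both limits to $\xi$. Fix $\varepsilon > 0$. Since $\{(j,k) : x_{jk} > \xi + \varepsilon\}$ is contained in the tail set $\{(j,k) : \parallel x_{jk} - \xi \parallel \geq \varepsilon\} \in I$, property (iii) gives $\xi + \varepsilon \notin B_x$, whence $I\text{-}\limsup x \leq \xi + \varepsilon$. On the other hand $\{(j,k) : x_{jk} \leq \xi - \varepsilon\}$ also sits inside that same tail set, so its complement $\{(j,k) : x_{jk} > \xi - \varepsilon\}$ lies in the filter $F(I)$ and hence is not in $I$; thus $\xi - \varepsilon \in B_x$ and $I\text{-}\limsup x \geq \xi - \varepsilon$. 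Letting $\varepsilon \to 0$ forces $I\text{-}\limsup x = \xi$, and the mirror-image argument with $A_x$ yields $I\text{-}\liminf x = \xi$, so the two coincide.

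For the converse I let $\xi$ denote the common value $I\text{-}\limsup x = I\text{-}\liminf x$ and verify the definition of $I$-convergence to $\xi$. Given $\varepsilon > 0$, the two facts recorded in the first step give $\{(j,k) : x_{jk} \geq \xi + \varepsilon\} \in I$ and $\{(j,k) : x_{jk} \leq \xi - \varepsilon\} \in I$ (passing from the strict to the non-strict inequality by the subset inclusion $\{x_{jk} \geq \xi + \varepsilon\} \subseteq \{x_{jk} > \xi + \varepsilon/2\}$, and similarly on the lower side). Then the decomposition
\[
\{(j,k) : \parallel x_{jk} - \xi \parallel \geq \varepsilon\} \subseteq \{(j,k) : x_{jk} \geq \xi + \varepsilon\} \cup \{(j,k) : x_{jk} \leq \xi - \varepsilon\}
\]
together with closure of $I$ under finite unions (ii) and under subsets (iii) shows that the tail set belongs to $I$, which is precisely the statement that $x$ is $I$-convergent to $\xi$.

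The step I expect to be the genuine obstacle is the clean justification of the characterizing properties in the first paragraph --- in particular arguing that a value strictly above $I\text{-}\limsup x$ produces a set lying in $I$, while a value strictly below it produces a set lying in the associated filter $F(I)$ (and symmetrically for $I\text{-}\liminf x$) --- because this is exactly the point where the supremum/infimum structure must be matched against the ideal/filter duality. Once these are in hand, both implications reduce to short bookkeeping with the ideal axioms (ii) and (iii); note in particular that the forward direction evaluates each of the two limits to $\xi$ in isolation, so that no appeal to the general inequality $I\text{-}\liminf x \leq I\text{-}\limsup x$ is actually needed for the argument to close.
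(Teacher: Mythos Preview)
Your argument is correct. You have cleanly used the Demirci-style definitions of $I\text{-}\limsup$ and $I\text{-}\liminf$ via the sets $B_x$ and $A_x$, exploited their downward/upward closure to obtain the key tail-set memberships, and then closed both implications with the ideal axioms; the passage from strict to non-strict inequalities via the $\varepsilon/2$ shift is handled properly.

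There is, however, nothing in the paper to compare against: Theorem~2.1 is simply quoted from reference~[5] (Das and Malik, \emph{Tatra Mt.\ Math.\ Publ.}) and no proof is supplied here. Your write-up is essentially the standard proof one finds in that literature (and in Demirci's single-sequence version~[6]), so it is fully in line with the intended argument. The only cosmetic remark is that your opening paragraph already establishes the ``characterizing properties'' you later flag as the potential obstacle --- once you have noted that $b>\sup B_x$ forces $b\notin B_x$ (a triviality about suprema) and hence $\{(j,k):x_{jk}>b\}\in I$, there is no further difficulty, so that worry can be dropped.
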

\begin{thm}[5]
Let $x= \{x_{jk}\}_{j,k \in \mathbb{N} }$ be a
bounded double sequence of real numbers, then

(i) $I-\limsup x = \max~I(\Gamma_x)$,

(ii) $I-\liminf x = \min~ I(\Gamma_x).$

\end{thm}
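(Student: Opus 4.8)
The plan is to prove part (i), that $I-\limsup x = \max I(\Gamma_x)$; part (ii) then follows by the symmetric argument obtained from dualizing every inequality (equivalently, by applying (i) to $-x$). I would begin by fixing the working description $\beta := I-\limsup x = \sup B_x$, where
\[
B_x = \{ b \in \mathbb{R} : \{(j,k) \in \mathbb{N} \times \mathbb{N} : x_{jk} > b \} \notin I \}.
\]
Since $x$ is bounded, $B_x$ is nonempty (for $b$ below the bound the associated set is all of $\mathbb{N} \times \mathbb{N} \notin I$) and bounded above (for $b$ above the bound the associated set is empty, hence in $I$), so $\beta$ is a finite real number. From the supremum characterization I extract two facts used repeatedly: for every $\varepsilon > 0$, (a) $\{(j,k) : x_{jk} > \beta - \varepsilon \} \notin I$, because some $b \in B_x$ exceeds $\beta - \varepsilon$ and its set is contained in this one; and (b) $\{(j,k) : x_{jk} > \beta + \varepsilon \} \in I$, because $\beta + \varepsilon \notin B_x$.

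The core of the argument is to show first that $\beta \in I(\Gamma_x)$. Fixing $\varepsilon > 0$, I would write the window set as a set difference,
\[
\{(j,k) : |x_{jk} - \beta| < \varepsilon \} = \{(j,k) : x_{jk} > \beta - \varepsilon \} \setminus \{(j,k) : x_{jk} \geq \beta + \varepsilon \}.
\]
Applying (b) with $\varepsilon/2$ gives $\{x_{jk} > \beta + \varepsilon/2\} \in I$, and since $\{x_{jk} \geq \beta + \varepsilon\}$ is a subset of it, ideal axiom (iii) places the subtracted set in $I$. Now the first set on the right is the union of the window set and the subtracted set; if the window set were in $I$, then by ideal axiom (ii) the first set would lie in $I$, contradicting (a). Hence the window set is not in $I$ for any $\varepsilon > 0$, so $\beta$ is an $I$-cluster point.

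It remains to show $\beta$ dominates every $I$-cluster point. Suppose, for contradiction, that $\xi \in I(\Gamma_x)$ with $\xi > \beta$, and set $\varepsilon = (\xi - \beta)/2 > 0$, so that $\xi - \varepsilon = \beta + \varepsilon$. Then
\[
\{(j,k) : |x_{jk} - \xi| < \varepsilon \} \subseteq \{(j,k) : x_{jk} > \xi - \varepsilon \} = \{(j,k) : x_{jk} > \beta + \varepsilon \},
\]
and the right-hand set lies in $I$ by (b); by ideal axiom (iii) the left-hand set is in $I$ as well, contradicting $\xi \in I(\Gamma_x)$. Thus every $I$-cluster point is at most $\beta$, and combined with $\beta \in I(\Gamma_x)$ this yields $\beta = \max I(\Gamma_x)$.

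The main obstacle I anticipate is not conceptual but a matter of careful bookkeeping: keeping the strict and non-strict inequalities aligned so that the window $\{|x_{jk} - \beta| < \varepsilon\}$ decomposes as a set difference whose two pieces have known ideal membership, with the $\varepsilon/2$ shift handled by axiom (iii). Once that decomposition is in place, the entire proof rests only on the two defining closure properties of an ideal (closure under finite unions and under subsets); the boundedness hypothesis enters solely to guarantee that $\beta$ is finite and, consequently, that $I(\Gamma_x)$ is nonempty so that the maximum and minimum are genuinely attained.
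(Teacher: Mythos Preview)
Your argument is correct and is the standard route to this result: characterize $\beta=I\text{-}\limsup x$ via the set $B_x$, extract the two one-sided facts (a) and (b), use them to show $\beta$ is an $I$-cluster point by the set-difference decomposition, and then rule out any larger cluster point. The bookkeeping with strict versus non-strict inequalities and the $\varepsilon/2$ shift is handled cleanly.

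There is nothing to compare against here: the paper does not supply its own proof of this theorem. It is quoted from reference~[5] (Das and Malik, \emph{On extremal $I$-limit points of double sequences}), as the bracketed citation in the theorem header indicates, and is used only as a tool in the proof of Theorem~3.5. Your write-up is essentially what one finds in that source (and in Demirci~[6] for single sequences), so you have faithfully reconstructed the intended argument.
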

The above result is also true for $I$-bounded double sequences. So it can be stated as follows.

\begin{thm}
Let $x= \{x_{jk}\}_{j,k \in \mathbb{N} }$ be an
$I$-bounded double sequence of real numbers, then

(i) $I-\limsup x = \max~I(\Gamma_x)$,

(ii) $I-\liminf x = \min~ I(\Gamma_x).$

\end{thm}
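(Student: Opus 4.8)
The plan is to reduce the $I$-bounded case to the already-known bounded case of Theorem 2.2 by replacing $x$ with a genuinely bounded sequence that differs from $x$ only on a set lying in $I$. Since $x$ is $I$-bounded, I would first fix $M > 0$ with $A := \{(j,k) \in \mathbb{N} \times \mathbb{N} : |x_{jk}| \geq M\} \in I$, and then define $y = \{y_{jk}\}_{j,k \in \mathbb{N}}$ by $y_{jk} = x_{jk}$ for $(j,k) \notin A$ and $y_{jk} = 0$ for $(j,k) \in A$. Then $|y_{jk}| < M$ for every $(j,k)$, so $y$ is a bounded double sequence of real numbers, and $x$ and $y$ agree on the complement of $A$.

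The heart of the argument is the following ``agreement modulo $I$'' principle: two real double sequences that coincide outside a set of $I$ have the same $I$-cluster set, the same $I-\limsup$ and the same $I-\liminf$. To establish it, I would fix $\xi \in \mathbb{R}$ and $\varepsilon > 0$ and compare $S_x := \{(j,k) : |x_{jk} - \xi| < \varepsilon\}$ with the corresponding set $S_y$ for $y$. Because $x_{jk} = y_{jk}$ whenever $(j,k) \notin A$, the symmetric difference $S_x \triangle S_y$ is contained in $A \in I$; as $I$ is an ideal this forces $S_x \in I \iff S_y \in I$, and hence $S_x \notin I \iff S_y \notin I$. By Definition 2.9 this gives $I(\Gamma_x) = I(\Gamma_y)$. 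The same comparison applied to the tail sets $\{(j,k) : x_{jk} > a\}$ and $\{(j,k) : x_{jk} < a\}$, through which $I-\limsup$ and $I-\liminf$ are defined, shows that membership of these sets in $I$ is unchanged on passing from $x$ to $y$; consequently $I-\limsup x = I-\limsup y$ and $I-\liminf x = I-\liminf y$.

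With this principle in place, I would apply Theorem 2.2 to the bounded sequence $y$ to obtain $I-\limsup y = \max I(\Gamma_y)$ and $I-\liminf y = \min I(\Gamma_y)$. Substituting the three equalities $I-\limsup x = I-\limsup y$, $I-\liminf x = I-\liminf y$ and $I(\Gamma_x) = I(\Gamma_y)$ then yields (i) and (ii); in particular $I(\Gamma_x) = I(\Gamma_y)$ is nonempty, so the maximum and minimum are genuinely attained.

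The hard part will be the bookkeeping in the agreement principle --- precisely, confirming that the $I$-membership of each tail set $\{(j,k) : x_{jk} > a\}$ is invariant under the modification on $A$, and that this invariance is exactly what the definitions of $I-\limsup$ and $I-\liminf$ (as inherited from [5]) require. Once this is checked, everything else reduces to a direct citation of Theorem 2.2.
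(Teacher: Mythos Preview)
The paper does not actually prove this theorem: after quoting Theorem~2.2 from [5] for bounded double sequences, it simply asserts that ``the above result is also true for $I$-bounded double sequences'' and restates it as Theorem~2.3, with no argument given. Your proposal therefore supplies strictly more than the paper does.

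Your reduction is correct and is the natural way to justify the paper's assertion. Replacing $x$ by the bounded sequence $y$ that agrees with $x$ off a set $A \in I$, and then verifying the ``agreement modulo $I$'' principle, is exactly right: for any set of indices defined by a pointwise condition on the terms (whether $\{(j,k): |x_{jk}-\xi|<\varepsilon\}$, $\{(j,k): x_{jk}>a\}$, or $\{(j,k): x_{jk}<a\}$), the corresponding sets for $x$ and $y$ have symmetric difference contained in $A\in I$, hence lie in $I$ simultaneously. This gives $I(\Gamma_x)=I(\Gamma_y)$, $I\text{-}\limsup x = I\text{-}\limsup y$, and $I\text{-}\liminf x = I\text{-}\liminf y$, after which Theorem~2.2 applied to $y$ finishes the job. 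The only thing to be careful about is that the definitions of $I\text{-}\limsup$ and $I\text{-}\liminf$ used in [5] are indeed phrased purely in terms of $I$-membership of such tail sets; they are, so your bookkeeping goes through.
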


\begin{thm} [8]
For a double sequence $x = \{ x_{jk}\}_{ j,k \in \mathbb{N}}$ in a
normed linear space $( X, \parallel . \parallel )$ we have $diam(I-LIM_x^r) \leq 2r $.
In particular if $x \overset{I}\longrightarrow \xi $, then $I-LIM_x^r =
\overline{B_r}(\xi) = \{ y \in X: \parallel y - \xi \parallel \leq r\}$ and so
$ diam(I-LIM_x^r) = 2r $.
\end{thm}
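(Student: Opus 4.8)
The plan is to establish the two assertions separately: first the universal bound $diam(I\text{-}LIM_x^r) \leq 2r$, valid for every double sequence, and then the exact description of the limit set under the hypothesis that $x$ is ordinarily $I$-convergent to $\xi$. The single recurring tool throughout will be that $I$ is a nontrivial (strongly admissible) ideal, so that whenever two index sets lie in $I$ their union also lies in $I$, and the complement of that union, being a member of the associated filter $F(I)$, is nonempty. This lets me extract a single index $(j,k)$ satisfying several norm estimates simultaneously.

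For the diameter bound I would take arbitrary $y, z \in I\text{-}LIM_x^r$ and fix $\varepsilon > 0$. By definition the sets $A = \{(j,k) : \parallel x_{jk} - y \parallel \geq r + \varepsilon\}$ and $B = \{(j,k) : \parallel x_{jk} - z \parallel \geq r + \varepsilon\}$ both belong to $I$, hence so does $A \cup B$; since $I$ is nontrivial there is an index $(j,k) \notin A \cup B$, for which $\parallel x_{jk} - y \parallel < r + \varepsilon$ and $\parallel x_{jk} - z \parallel < r + \varepsilon$. The triangle inequality then gives $\parallel y - z \parallel < 2(r + \varepsilon)$, and letting $\varepsilon \to 0$ yields $\parallel y - z \parallel \leq 2r$. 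Taking the supremum over all such $y, z$ gives $diam(I\text{-}LIM_x^r) \leq 2r$.

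For the second assertion I would prove the two inclusions of $I\text{-}LIM_x^r = \overline{B_r}(\xi)$ in turn. For $\overline{B_r}(\xi) \subseteq I\text{-}LIM_x^r$, take $y$ with $\parallel y - \xi \parallel \leq r$ and any $\varepsilon > 0$; the triangle inequality shows that $\parallel x_{jk} - y \parallel \geq r + \varepsilon$ forces $\parallel x_{jk} - \xi \parallel \geq \varepsilon$, so $\{(j,k) : \parallel x_{jk} - y \parallel \geq r + \varepsilon\} \subseteq \{(j,k) : \parallel x_{jk} - \xi \parallel \geq \varepsilon\}$, and the latter lies in $I$ because $x \overset{I}\longrightarrow \xi$; monotonicity of $I$ (property (iii) of an ideal) finishes this direction. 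For the reverse inclusion I argue by contradiction: if $y \in I\text{-}LIM_x^r$ but $\parallel y - \xi \parallel > r$, I set $\varepsilon = \tfrac{1}{2}(\parallel y - \xi \parallel - r) > 0$, combine the two ideal sets $\{(j,k) : \parallel x_{jk} - \xi \parallel \geq \varepsilon\}$ and $\{(j,k) : \parallel x_{jk} - y \parallel \geq r + \varepsilon\}$ as before, pick a common index outside their union, and apply the triangle inequality to derive $\parallel y - \xi \parallel < \parallel y - \xi \parallel$, which is absurd.

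Finally, the computation $diam(\overline{B_r}(\xi)) = 2r$ follows by choosing any unit vector $v \in X$ and the antipodal points $\xi + rv,\ \xi - rv \in \overline{B_r}(\xi)$, which are at distance $2r$; combined with the first assertion this pins the diameter to exactly $2r$. I expect the only delicate point to be the careful use of the nontriviality of $I$ to guarantee a common index satisfying two strict norm estimates at once — everything else is routine triangle-inequality bookkeeping.
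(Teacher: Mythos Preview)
Your argument is correct and is the standard one; note however that the present paper does not supply its own proof of this theorem but merely quotes it from D\"undar [8], so there is no in-paper proof to compare against. The approach you outline---using nontriviality of $I$ to find a common index outside the union of two ideal sets and then applying the triangle inequality---is precisely the expected route, and is the argument given in the cited source. One cosmetic remark: the final equality $diam(\overline{B_r}(\xi)) = 2r$ tacitly assumes $X \neq \{0\}$ (so that a unit vector exists); this is implicit in the theorem statement as well and is standard to leave unmentioned.
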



\begin{note}
 When r=0, then $diam\left(I-LIM_x^r\right)=0$. Therefore $I-LIM^r_x$ is either $\phi$ or singleton. This implies the uniqueness of limit of $I$-convergent double sequence.
\end{note}

\begin{thm} [8]
Let $ x = \{x_{jk}\}_{j, k \in \mathbb{N}}$ be a double sequence in $X$ and $c \in I(\Gamma_x)$. Then $ \parallel \xi - c \parallel \leq r $ for all $ \xi \in I-LIM_x^r $ i.e. $I-LIM_x^r \subset \overline{B_r}(c) $. 
\end{thm}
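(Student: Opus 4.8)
The plan is to argue by contradiction, exploiting the fact that membership in the ideal $I$ is preserved under passage to subsets (property (iii) of Definition 2.3), together with the triangle inequality. Fix an $I$-cluster point $c \in I(\Gamma_x)$ and suppose, contrary to the assertion, that there is some $\xi \in I-LIM_x^r$ with $\parallel \xi - c \parallel > r$. I would then set $\delta = \parallel \xi - c \parallel - r > 0$ and choose $\varepsilon = \delta/2 > 0$; this particular calibration of $\varepsilon$ is what makes the estimate below close with the correct strict gap.

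Next I would record the two sets furnished by the hypotheses. Since $c \in I(\Gamma_x)$, Definition 2.9 applied with this $\varepsilon$ gives
\[
B = \{ (j,k) \in \mathbb{N} \times \mathbb{N} : \parallel x_{jk} - c \parallel < \varepsilon \} \notin I.
\]
On the other hand, since $\xi \in I-LIM_x^r$, the defining condition of $rI$-convergence (Definition 2.7) applied with the same $\varepsilon$ gives
\[
C = \{ (j,k) \in \mathbb{N} \times \mathbb{N} : \parallel x_{jk} - \xi \parallel \geq r + \varepsilon \} \in I.
\]

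The crux of the argument is to show the inclusion $B \subseteq C$. For any $(j,k) \in B$ the triangle inequality yields
\[
\parallel x_{jk} - \xi \parallel \;\geq\; \parallel \xi - c \parallel - \parallel x_{jk} - c \parallel \;>\; (r + 2\varepsilon) - \varepsilon \;=\; r + \varepsilon,
\]
where I have used $\parallel \xi - c \parallel = r + \delta = r + 2\varepsilon$ and $\parallel x_{jk} - c \parallel < \varepsilon$. Hence every such $(j,k)$ lies in $C$, so indeed $B \subseteq C$.

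Finally, since $C \in I$ and $B \subseteq C$, property (iii) of an ideal forces $B \in I$, contradicting $B \notin I$. Therefore no such $\xi$ can exist, i.e. $\parallel \xi - c \parallel \leq r$ for every $\xi \in I-LIM_x^r$, which is precisely the inclusion $I-LIM_x^r \subset \overline{B_r}(c)$. The only real subtlety — and the step I would treat most carefully — is the choice of $\varepsilon$: taking $\varepsilon = \delta/2$ rather than $\varepsilon = \delta$ is exactly what guarantees the surviving strict gap $r + \varepsilon$ in the triangle-inequality estimate, so that $B$ genuinely sits inside $C$; the rest is a routine application of the ideal's downward closure.
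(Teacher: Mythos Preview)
Your argument is correct and is the natural route for this statement: assume $\parallel \xi - c \parallel > r$, calibrate $\varepsilon = \delta/2$, obtain one set not in $I$ from the cluster-point definition and one set in $I$ from $rI$-convergence, and derive a contradiction via the triangle inequality and the downward closure of $I$. Note, however, that the paper itself does not supply a proof of this theorem: it is recorded in Section~2 as a known result quoted from reference~[8] (D\"undar), so there is no in-paper argument to compare against. Your proof stands on its own as a clean, self-contained justification.
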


We now consider an example of a double sequence which is rough $I$-convergent but not rough convergent.

\begin{Example}
We consider the ideal $I_{d} = \{ A \subset \mathbb{N} \times \mathbb{N}: d(A) = 0 \}$. Let $ x =\{x_{jk}\}_{j,k \in \mathbb{N}}$ be a double sequence in the normed linear space $(\mathbb{R} , \parallel .\parallel)$ defined by
\begin{eqnarray*}
 x_{jk} &=& 2jk ~~~~,\mbox{if}~~~j ~~~\mbox{and}~~~k ~\mbox{are squares},\\
        &=& (-1)^{j+k} ,~\mbox{otherwise}.
\end{eqnarray*}
Then
\[ I_{d}-LIM_x^r = \left\{
  \begin{array}{l l}
    \emptyset & ;\quad \text{if $r<1$ }\\
    \left[1-r,r-1\right] &; \quad \text{if $r \geq 1$ }
  \end{array} \right.\]\\
and $ LIM_x^r = \emptyset $ for all $ r \geq 0$.
\end{Example}
From the above example we see that $I-LIM_x^r  \neq \emptyset $ does not imply $ LIM_x^r \neq \emptyset $. But $ LIM_x^r \neq  \emptyset $ always implies that $I-LIM_x^r  \neq \emptyset $.


\section{\textbf{Main Results}}


We first establish a relation between boundedness and rough $I$-convergence of double sequences.
\begin{thm}
 If a double sequence $x=\left\{x_{jk}\right\}$ is bounded, then there exists $r\geq 0$ such that $I-LIM_x^r\neq \phi$.
\end{thm}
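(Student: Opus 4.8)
The plan is to avoid searching for a genuine limit and instead let the roughness degree absorb the whole bounded range of the sequence. Since $x$ is bounded, there is a positive real number $M$ with $\parallel x_{jk}\parallel < M$ for every $(j,k)\in\mathbb{N}\times\mathbb{N}$. I would fix the roughness degree $r=M$ and take as candidate $rI$-limit the zero vector $\theta$ of $X$, and then verify directly from Definition 2.7 that $\theta\in I-LIM_x^r$.

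To carry out the verification, fix an arbitrary $\varepsilon>0$ and consider the set $A(\varepsilon)=\{(j,k)\in\mathbb{N}\times\mathbb{N}:\parallel x_{jk}-\theta\parallel\geq r+\varepsilon\}=\{(j,k)\in\mathbb{N}\times\mathbb{N}:\parallel x_{jk}\parallel\geq M+\varepsilon\}$. Because $\parallel x_{jk}\parallel< M< M+\varepsilon$ for every $(j,k)$, the set $A(\varepsilon)$ is empty. By axiom (i) of Definition 2.3, $\phi\in I$, so $A(\varepsilon)\in I$ for every $\varepsilon>0$. This is exactly the defining condition of Definition 2.7 for $x\overset{rI}\longrightarrow\theta$, whence $\theta\in I-LIM_x^M$ and therefore $I-LIM_x^M\neq\phi$, which establishes the claim with $r=M$.

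There is essentially no technical obstacle here: the only structural fact used is that the empty set belongs to every ideal, which discharges the verification automatically once the range of $x$ is contained in the ball of radius $M$ about $\theta$. The genuinely useful observation is conceptual rather than computational, namely that one need not produce an honest $I$-limit, since a sufficiently large roughness degree makes the excluded index set empty. The choice $r=M$ is of course not optimal; by centering the candidate limit inside the range of the sequence one could lower the admissible degree, but for the existence assertion of the theorem the construction above suffices.
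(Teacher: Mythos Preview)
Your argument is correct and is essentially the same as the paper's intended approach: the paper omits the proof, referring to the analogous result for rough statistical convergence, and the parallel Theorem~3.2 here likewise shows $0\in I\text{-}LIM_x^{r}$ by taking $r$ large enough to absorb the range of the sequence. Your choice $r=M$ with candidate limit $\theta$ is exactly this idea, and the observation that $A(\varepsilon)=\phi\in I$ completes the verification cleanly.
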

\begin{proof}
The proof is similar to the proof of Theorem 3.2 [13], so is omitted.
\end{proof}
\begin{note}
Taking $I=\left\{A\in \mathbb{N}\times\mathbb{N}: d\left(A\right)= 0\right\}$, from Note 3.2 [13] we see that  the converse of Theorem 3.1 is not true.
 \end{note}
We now show that the converse of Theorem 3.1 is true if the double sequence $x$ is $I$-bounded.

\begin{thm}
A double sequence $ x $ is $I$-bounded if and only if there exists $ r \geq 0 $ such that $ I-LIM^r_x \neq \emptyset $.
\end{thm}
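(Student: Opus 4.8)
The plan is to prove the two implications separately, each resting on only two facts: the hereditary (downward-closure) property of the ideal $I$, namely condition (iii) of Definition 2.3, and the triangle inequality in $(X, \parallel . \parallel)$.

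For the forward implication, I would suppose $x$ is $I$-bounded, so that there is some $M > 0$ with $A := \{(j,k) \in \mathbb{N} \times \mathbb{N} : \parallel x_{jk} \parallel \geq M\} \in I$. I would then claim that the zero vector $\xi = 0$ of $X$ is an $MI$-limit of $x$; that is, I would take $r = M$ and $\xi = 0$. Indeed, for every $\varepsilon > 0$ the inclusion $\{(j,k) : \parallel x_{jk} - 0 \parallel \geq M + \varepsilon\} \subseteq A$ holds, since $\parallel x_{jk} \parallel \geq M + \varepsilon$ forces $\parallel x_{jk} \parallel \geq M$. Heredity of $I$ then gives that the left-hand set lies in $I$, so $0 \in I-LIM^M_x$, and in particular $I-LIM^M_x \neq \emptyset$.

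For the converse, I would suppose $I-LIM^r_x \neq \emptyset$ for some $r \geq 0$ and fix $\xi \in I-LIM^r_x$. Specializing the defining condition of Definition 2.7 to $\varepsilon = 1$, the set $A := \{(j,k) : \parallel x_{jk} - \xi \parallel \geq r + 1\}$ belongs to $I$. I would then put $M := r + 1 + \parallel \xi \parallel$ and verify the inclusion $\{(j,k) : \parallel x_{jk} \parallel \geq M\} \subseteq A$: whenever $\parallel x_{jk} \parallel \geq M$, the triangle inequality gives $\parallel x_{jk} - \xi \parallel \geq \parallel x_{jk} \parallel - \parallel \xi \parallel \geq r + 1$, so $(j,k) \in A$. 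Heredity of $I$ again yields that $\{(j,k) : \parallel x_{jk} \parallel \geq M\} \in I$, which is exactly the statement that $x$ is $I$-bounded.

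There is no serious obstacle here; the only point requiring care is the bookkeeping of the set inclusions and the matching of constants ($r = M$ in one direction, $M = r + 1 + \parallel \xi \parallel$ in the other), so that the reverse triangle inequality lines up with the threshold defining $I$-boundedness. The value $\varepsilon = 1$ chosen in the converse is immaterial; any fixed positive $\varepsilon$ would serve, since all that is needed is a single exceptional set in $I$ on whose complement the sequence is uniformly norm-bounded.
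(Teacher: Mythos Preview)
Your proof is correct and follows essentially the same approach as the paper: in both directions you show $0 \in I\text{-}LIM^r_x$ (forward) and use $\varepsilon = 1$ with the triangle inequality and heredity of $I$ (converse), exactly as the paper does. The only difference is that in the forward implication the paper takes $r' = \sup\{\parallel x_{jk} \parallel : (j,k) \notin A\}$ rather than your simpler choice $r = M$; your version is slightly cleaner, since the supremum is unnecessary for the statement as given.
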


\begin{proof}
Let $ x $ be an $I$-bounded double sequence. Then there exists a positive real number $ M $ such that $ A = \{(j,k) \in \mathbb{N} \times \mathbb{N} : \parallel x_{jk} \parallel \geq M \} \in I $. Let $ r^{'} = sup \{ \parallel x_{jk} \parallel: (j,k) \in \mathbb{N} \times \mathbb{N} \setminus A \}$. Then $ 0 \in I-LIM_x^{r^{'}} $ and so $ I-LIM_x^{r^{'}} \neq \emptyset $.\\

Conversely, let $ I-LIM_x^r \neq \emptyset $ for some $ r \geq 0 $. Let $ \xi \in I-LIM_x^r $. Take $\varepsilon = 1 $. Then $ B =  \{(j,k) \in \mathbb{N} \times \mathbb{N} : \parallel x_{jk} - \xi \parallel \geq 1 + r \} \in I $. Now $\{(j,k) \in \mathbb{N} \times \mathbb{N}: \parallel x_{jk} \parallel \geq 1+r + \parallel \xi \parallel \} \subset B $ and so $\{(j,k) \in \mathbb{N} \times \mathbb{N}: \parallel x_{jk} \parallel \geq 1 + r + \parallel \xi \parallel \} \in I $. This shows that $ x $ is $I$-bounded.
\end{proof}

Next we present an alternative proof of Theorem 2.4 [8] which gives a topological property of the $rI$-limit set of a double sequence..


\begin{thm}
For all $ r \geq 0 $, the $rI$-limit set $ I-LIM^r_x $, of a double sequence $ x = \{x_{jk}\}_{j, k \in \mathbb{N}} $ is closed.
\end{thm}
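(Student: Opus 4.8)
The plan is to show that the complement of $I-LIM_x^r$ is open, or equivalently that $I-LIM_x^r$ contains all its limit points. I would take a sequence of points in the limit set converging to some $y$ and show $y$ itself lies in the set. The standard route is: assume $I-LIM_x^r = \emptyset$, in which case it is trivially closed, so suppose it is nonempty and let $y$ be a limit point.

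First I would fix an arbitrary $\varepsilon > 0$ and exploit the convergence of a sequence $(y_i) \subset I-LIM_x^r$ with $y_i \to y$. Choose an index $i_0$ large enough that $\parallel y_{i_0} - y \parallel < \frac{\varepsilon}{2}$. Since $y_{i_0} \in I-LIM_x^r$, by the definition of rough $I$-convergence the set $\{(j,k) \in \mathbb{N} \times \mathbb{N} : \parallel x_{jk} - y_{i_0} \parallel \geq r + \frac{\varepsilon}{2} \}$ belongs to $I$. The key step is then a triangle-inequality containment argument: I would show that
\begin{center}
$\{(j,k) : \parallel x_{jk} - y \parallel \geq r + \varepsilon \} \subset \{(j,k) : \parallel x_{jk} - y_{i_0} \parallel \geq r + \tfrac{\varepsilon}{2} \}$,
\end{center}
because if $\parallel x_{jk} - y \parallel \geq r + \varepsilon$ while $\parallel y_{i_0} - y \parallel < \frac{\varepsilon}{2}$, then $\parallel x_{jk} - y_{i_0} \parallel \geq \parallel x_{jk} - y \parallel - \parallel y - y_{i_0} \parallel > r + \varepsilon - \frac{\varepsilon}{2} = r + \frac{\varepsilon}{2}$.

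Once the containment is established, the ideal axiom (iii) from Definition 2.3 — that subsets of members of $I$ are again in $I$ — immediately gives that $\{(j,k) : \parallel x_{jk} - y \parallel \geq r + \varepsilon \} \in I$. Since $\varepsilon > 0$ was arbitrary, this is exactly the statement that $x \overset{rI}\longrightarrow y$, i.e. $y \in I-LIM_x^r$. Hence the set contains all its limit points and is therefore closed.

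I do not expect any serious obstacle here; the argument is a routine closedness proof whose only real content is bookkeeping the $\varepsilon/2$ split so that the triangle inequality produces a genuine subset relation, and then invoking the downward-closure property of the ideal. The one point requiring mild care is the trivial case $I-LIM_x^r = \emptyset$, which should be dispatched at the outset since the empty set is vacuously closed.
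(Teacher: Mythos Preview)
Your argument is correct and essentially identical to the paper's: the paper also takes a limit point $\xi$, picks a nearby point $\alpha \in B_{\varepsilon/2}(\xi) \cap I-LIM_x^r$, and uses the same triangle-inequality containment together with the downward-closure of $I$ to conclude $\xi \in I-LIM_x^r$. The only cosmetic differences are that the paper selects $\alpha$ directly from the intersection rather than via a convergent sequence and does not separately mention the empty case.
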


\begin{proof}
Let $ \xi $ be a limit point of $ I-LIM_x^r $. Then for any $ \varepsilon > 0 $, $ B_{\frac{\varepsilon}{2}}(\xi) \bigcap I-LIM_x^r \neq \emptyset $. Let $ \alpha \in B_{\frac{\varepsilon}{2}}(\xi) \bigcap I-LIM^r_x$. Since $ \alpha \in I-LIM^r_x$ so $A(\frac{\varepsilon}{2}) = \{(j,k) \in \mathbb{N} \times \mathbb{N} : \parallel x_{jk} - \alpha \parallel \geq r + \frac{\varepsilon}{2}\} \in I $. Let $B(\varepsilon) = \{(j,k) \in \mathbb{N} \times \mathbb{N} : \parallel x_{jk} - \xi \parallel \geq r + \varepsilon\}$.\\
Now $(j,k) \notin A(\frac{\varepsilon}{2})$ implies $ (j,k) \notin B(\varepsilon) $. Thus $(j,k) \in B(\varepsilon)$ implies $(j,k) \in A(\frac{\varepsilon}{2})$. This implies $B(\varepsilon) \subset A(\frac{\varepsilon}{2})$ and so $B(\varepsilon) = \{(j,k) \in \mathbb{N} \times \mathbb{N} : \parallel x_{jk} - \xi \parallel \geq r + \varepsilon \} \in I $. Therefore $ \xi \in I-LIM^r_x$. Hence $ I-LIM^r_x$ is a closed set in $ X $.
\end{proof}


\begin{thm}
Let $x = \{x_{jk}\}_{ j, k \in \mathbb{N}} $ be a double sequence in $X$. Then $x$ is $I$-convergent to $ \xi $ if and only if $I-LIM^r_x= \overline{B_r}(\xi)$.
\end{thm}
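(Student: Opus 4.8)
The plan is to prove both directions of the biconditional, and the key is to recognize that the statement is essentially a converse/sharpening of Theorem~2.3~[8], which already supplies the forward implication. For the forward direction, I would assume $x \overset{I}\longrightarrow \xi$ and show $I-LIM_x^r = \overline{B_r}(\xi)$. This is exactly the content of the ``in particular'' clause of Theorem~2.3~[8], so one direction is immediate by citation. Nonetheless, for a self-contained argument I would reprove the two inclusions directly: if $\eta \in \overline{B_r}(\xi)$, so $\parallel \eta - \xi \parallel \leq r$, then for any $\varepsilon > 0$ the triangle inequality gives $\parallel x_{jk} - \eta \parallel \leq \parallel x_{jk} - \xi \parallel + r$, whence $\{(j,k) : \parallel x_{jk} - \eta \parallel \geq r + \varepsilon\} \subset \{(j,k) : \parallel x_{jk} - \xi \parallel \geq \varepsilon\}$; since the latter set lies in $I$ by $I$-convergence, so does the former, giving $\eta \in I-LIM_x^r$. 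The reverse inclusion $I-LIM_x^r \subset \overline{B_r}(\xi)$ follows because $\xi \in I(\Gamma_x)$ when $x \overset{I}\longrightarrow \xi$, so Theorem~2.5~[8] applies directly.

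For the converse direction I would assume $I-LIM_x^r = \overline{B_r}(\xi)$ and deduce that $x \overset{I}\longrightarrow \xi$. The natural route is through the diameter bound of Theorem~2.3~[8]. Since $\overline{B_r}(\xi)$ is a closed ball of radius $r$ in $X$, its diameter is exactly $2r$ (here I would note that finite-dimensionality or at least the genuine normed-space structure ensures the ball is nondegenerate, so $diam\,\overline{B_r}(\xi) = 2r$). Thus the hypothesis forces $diam(I-LIM_x^r) = 2r$, which is the extremal case of the inequality $diam(I-LIM_x^r) \leq 2r$. The task is then to argue that this extremal value can only be attained when $x$ is genuinely $I$-convergent to the centre $\xi$.

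The main obstacle will be this converse: extracting ordinary $I$-convergence from the mere equality of sets. The cleanest approach I would pursue is to work coordinatewise, reducing to the real-valued case and invoking the characterization of $I$-convergence via $I$-cluster points. Concretely, I would argue that since $I-LIM_x^r = \overline{B_r}(\xi)$ is bounded, $x$ is $I$-bounded (by Theorem~3.2 of this paper), so every coordinate sequence is $I$-bounded and has well-defined $I-\limsup$ and $I-\liminf$ governed by Theorems~2.6--2.8~[5]. The key point is that if $x$ were \emph{not} $I$-convergent to $\xi$, then some coordinate would possess an $I$-cluster point $c$ with $c \neq \xi$; by Theorem~2.5~[8] every $\eta \in I-LIM_x^r$ satisfies $\parallel \eta - c \parallel \leq r$, i.e. $I-LIM_x^r \subset \overline{B_r}(c)$. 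Combined with the assumed equality $I-LIM_x^r = \overline{B_r}(\xi)$, this would force $\overline{B_r}(\xi) \subset \overline{B_r}(c)$ with $c \neq \xi$, which is geometrically impossible for balls of equal radius with distinct centres (the point $\xi + r\,\frac{\xi - c}{\parallel \xi - c\parallel}$ lies in $\overline{B_r}(\xi)$ but not in $\overline{B_r}(c)$). This contradiction shows that $\xi$ is the unique $I$-cluster point, from which $I$-convergence to $\xi$ follows via the $\limsup$/$\liminf$ criterion. I expect the delicate bookkeeping to be in passing between the vector-valued cluster points and the scalar ones, and in handling the case $r = 0$ separately, where the statement collapses to the uniqueness of the ordinary $I$-limit already recorded in Note~3.1.
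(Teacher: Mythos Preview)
Your forward direction matches the paper's: both simply invoke the ``in particular'' clause of Theorem~2.4 (what you label Theorem~2.3~[8]).

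For the converse, the paper takes a completely different and far shorter route. It reads the hypothesis $I\text{-}LIM^r_x=\overline{B_r}(\xi)$ as holding for \emph{every} $r>0$, not for one fixed $r$. Given any $a>0$, the paper chooses $r>0$ and $\varepsilon>0$ with $r+\varepsilon<a$; since $\xi\in\overline{B_r}(\xi)=I\text{-}LIM^r_x$ for this small $r$, the set $\{(j,k):\|x_{jk}-\xi\|\geq r+\varepsilon\}$ lies in $I$, and hence so does its subset $\{(j,k):\|x_{jk}-\xi\|\geq a\}$. That is the entire argument: no cluster points, no diameter, no geometry of balls.

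Your approach instead treats $r$ as fixed and argues via $I$-cluster points and the rigidity of balls, which would establish a genuinely stronger statement. The core geometric step---any $I$-cluster point $c$ forces $\overline{B_r}(\xi)\subset\overline{B_r}(c)$ via Theorem~2.5, hence $c=\xi$---is correct. But the passage from ``$\xi$ is the unique $I$-cluster point of the $I$-bounded sequence $x$'' to ``$x$ is $I$-convergent to $\xi$'' is where the real work hides, and your coordinatewise sketch conflates two different objects: an $I$-cluster point of a single real coordinate is not an $I$-cluster point $c\in X$ of the full sequence, yet you immediately feed such a $c$ into Theorem~2.5 as though it lived in $X$. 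The implication you need does hold in finite-dimensional $X$ (for instance by a compactness argument on the compact shell $\{y:\|y\|\leq M,\ \|y-\xi\|\geq\varepsilon\}$), but as written that step is a gap. The paper avoids all of this machinery by simply letting $r$ shrink.
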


\begin{proof}
It directly follows from Theorem 2.4 that if $ x $ is $I$-convergent to $ \xi $, then $I-LIM^r_x= \overline{B_r}(\xi)$.

Conversely, let $I-LIM^r_x= \overline{B_r}(\xi)$. We have to show that $ x $ is $I$-convergent to $ \xi $, i.e. for all $ a > 0 $, $A(a)= \{(j,k) \in \mathbb{N} \times \mathbb{N} : \parallel x_{jk} - \xi \parallel \geq a \} \in I $. Now fixed $ a > 0 $. Let us choose $ r > 0 $ and $ \varepsilon > 0 $ such that $ r + \varepsilon < a $. For $ \xi \in I-LIM^r_x$,  $\{(j,k) \in \mathbb{N} \times \mathbb{N} : \parallel x_{jk} - \xi \parallel \geq r + \varepsilon\} \in I $. Since $\{(j,k) \in \mathbb{N} \times \mathbb{N} : \parallel x_{jk} - \xi \parallel \geq a\} \subset \{(j,k) \in \mathbb{N} \times \mathbb{N} : \parallel x_{jk} - \xi \parallel \geq r + \varepsilon \}$. So $\{(j,k) \mathbb{N} \times \mathbb{N} : \parallel x_{jk} -\xi \parallel \geq a\} \in I $. Hence $ x $ is $I$-convergent to $ \xi $.
\end{proof}


\begin{thm}
Let $(\mathbb{R}, \parallel . \parallel)$ be a strictly convex space and $ x = \{x_{jk}\}_{ j, k \in \mathbb{N}} $ be double sequence in $\mathbb{R}$. For any $ r > 0 $, let $ y_1, y_2 \in I-LIM^r_x$ with $\parallel y_1 - y_2 \parallel = 2r $. Then $ x $ is $I$-convergent to $ \frac{1}{2}(y_1 + y_2)$.
\end{thm}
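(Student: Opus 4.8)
The plan is to set $\xi = \frac{1}{2}(y_1 + y_2)$ and show directly that for every $a > 0$ the set $\{(j,k) \in \mathbb{N} \times \mathbb{N} : \parallel x_{jk} - \xi \parallel \geq a\}$ belongs to $I$. Fix $\varepsilon > 0$. Since $y_1, y_2 \in I-LIM_x^r$, both sets $A_i(\varepsilon) = \{(j,k) : \parallel x_{jk} - y_i \parallel \geq r + \varepsilon\}$, $i = 1, 2$, lie in $I$, hence so does their union, and the set $G(\varepsilon) = (\mathbb{N} \times \mathbb{N}) \setminus (A_1(\varepsilon) \cup A_2(\varepsilon))$ belongs to the associated filter $F(I)$. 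On $G(\varepsilon)$ we have simultaneously $\parallel x_{jk} - y_1 \parallel < r + \varepsilon$ and $\parallel x_{jk} - y_2 \parallel < r + \varepsilon$.

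Next I would exploit the extremal hypothesis $\parallel y_1 - y_2 \parallel = 2r$ through the triangle inequality. For $(j,k) \in G(\varepsilon)$, writing $u = x_{jk} - y_1$ and $v = x_{jk} - y_2$, we have $u - v = y_2 - y_1$, so $\parallel u - v \parallel = 2r$, while $\parallel u \parallel, \parallel v \parallel < r + \varepsilon$. The bound $2r = \parallel u - v \parallel \leq \parallel u \parallel + \parallel v \parallel$ then forces $\parallel u \parallel, \parallel v \parallel > r - \varepsilon$, so both norms are squeezed into the interval $(r - \varepsilon, r + \varepsilon)$. Since $x_{jk} - \xi = \frac{1}{2}(u + v)$, the task reduces to bounding $\parallel u + v \parallel$.

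The decisive step is strict convexity. In the limiting situation $\parallel u \parallel = \parallel v \parallel = r$ with $\parallel u - v \parallel = 2r = \parallel u \parallel + \parallel {-v} \parallel$, equality in the triangle inequality together with strict convexity forces $u = -v$, i.e.\ $x_{jk} = \xi$ exactly. For the approximate version I would establish a uniform quantitative estimate: for every $\delta > 0$ there is an $\varepsilon > 0$ such that $\parallel u \parallel, \parallel v \parallel < r + \varepsilon$ and $\parallel u - v \parallel = 2r$ imply $\parallel u + v \parallel < \delta$. I expect this to be the main obstacle, since strict convexity is only a qualitative condition; the natural remedy is a compactness argument in the finite-dimensional setting. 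If the estimate failed there would be sequences $u_n, v_n$ with $\parallel u_n \parallel, \parallel v_n \parallel \to r$, $\parallel u_n - v_n \parallel = 2r$ and $\parallel u_n + v_n \parallel \geq \delta$, and passing to convergent subsequences (closed balls being compact) would produce a genuine equality case violating strict convexity. In the literal one-dimensional case $X = \mathbb{R}$ this step is elementary: one checks directly that $G(\varepsilon) \subset \{(j,k) : \parallel x_{jk} - \xi \parallel < \varepsilon\}$, so one may simply take $\delta = \varepsilon$.

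Finally I would assemble the pieces. The quantitative estimate gives a function $\delta(\varepsilon) \to 0$ as $\varepsilon \to 0$ with $\parallel x_{jk} - \xi \parallel \leq \delta(\varepsilon)$ for all $(j,k) \in G(\varepsilon)$; equivalently $\{(j,k) : \parallel x_{jk} - \xi \parallel > \delta(\varepsilon)\} \subseteq A_1(\varepsilon) \cup A_2(\varepsilon) \in I$. Given any $a > 0$, choosing $\varepsilon$ small enough that $\delta(\varepsilon) < a$ yields $\{(j,k) : \parallel x_{jk} - \xi \parallel \geq a\} \in I$. As $a > 0$ was arbitrary, $x$ is $I$-convergent to $\xi = \frac{1}{2}(y_1 + y_2)$, as claimed.
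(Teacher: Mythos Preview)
Your argument is correct but follows a genuinely different route from the paper. The paper does not bound $\parallel x_{jk}-\xi\parallel$ directly. Instead it takes an arbitrary $I$-cluster point $y_3$ of $x$, uses Theorem~2.5 to get $\parallel y_i-y_3\parallel\le r$ for $i=1,2$, and then applies strict convexity to the exact equality case $\parallel y_1-y_3\parallel=\parallel y_2-y_3\parallel=r$ to force $y_3=\tfrac12(y_1+y_2)$; finally it invokes Theorem~3.2 ($I$-boundedness) together with the real-sequence machinery of Theorems~2.1 and~2.3 (an $I$-bounded real sequence with a unique $I$-cluster point is $I$-convergent). Your approach instead upgrades strict convexity to a uniform modulus via compactness and feeds that quantitative estimate straight into the definition of $I$-convergence. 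The paper's proof is shorter once the cited lemmas are in hand, but it is tied to the order structure of $\mathbb{R}$ through the $\limsup/\liminf$ theorems; your argument is self-contained and works verbatim in any finite-dimensional strictly convex space, at the cost of the extra compactness step (which, as you note, is unnecessary in the literal one-dimensional case).
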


\begin{proof}
Let $ y_3 $ be an arbitrary $I$-cluster point of $ x $. Now since $ y_1, y_2 \in I-LIM^r_x$, so by Theorem 2.5 we have
\begin{center}
 $ \parallel y_1 - y_3 \parallel \leq r $ and $ \parallel y_2 - y_3 \parallel \leq r $.
\end{center}
Then $ 2r = \parallel y_1 - y_2 \parallel \leq \parallel y_1 - y_3 \parallel + \parallel y_3 - y_2 \parallel \leq 2r $. Therefore $\parallel y_1 - y_3 \parallel = \parallel y_2 - y_3 \parallel = r $. Now
\begin{eqnarray}
\frac{1}{2}(y_1 - y_2) = \frac{1}{2}[(y_3 - y_1)+(y_2 - y_3)].
\end{eqnarray}
Since $\parallel y_1 - y_2 \parallel = 2r $, so $\frac{1}{2} \parallel y_2 - y_1 \parallel = r $. Again since the space is strictly convex, so by (1) we get $\frac{1}{2}(y_2 - y_1) = y_3 - y_1 = y_2 - y_3 $. Thus $ y_3 $ is the unique $I$-cluster point of the double sequence $ x $. Again by the given condition $ I-LIM^r_x\neq \emptyset $, so by  Theorem 3.2 $ x $ is $I$-bounded. Since $ y_3 $ is the unique $I$-cluster point of the $I$-bounded double sequence $ x $, so by Theorem 2.1 and Theorem 2.3 $ x $ is $I$-convergent to $ y_3 = \frac{1}{2}( y_1 + y_2 )$.
\end{proof}

\noindent\textbf{Acknowledgement:} The authors are grateful to
Prof. Pratulananda Das, Department of Mathematics, Jadavpur
University for his advice during the preparation of this paper.
\\

\end{document}